\documentclass[11pt]{article}

\usepackage[margin=1.15in]{geometry}
\usepackage{amsmath,amssymb,amsthm,mathtools}
\usepackage{microtype}
\usepackage{enumitem}
\usepackage[hidelinks]{hyperref}

\newtheorem{theorem}{Theorem}[section]
\newtheorem{lemma}[theorem]{Lemma}
\newtheorem{corollary}[theorem]{Corollary}

\newcommand{\diam}{\operatorname{diam}}

\newcommand{\ZZ}{\mathbb Z}
\newcommand{\RR}{\mathbb R}
\newcommand{\QQ}{\mathbb Q}

\title{Distances in Planar Integral Point Sets}
\author{J\'ozsef Solymosi%
\thanks{Department of Mathematics, University of British Columbia, Vancouver, BC V6T 1Z2, Canada, and Óbuda University, Budapest, Hungary.
Email: \texttt{solymosi@math.ubc.ca}.}}
\date{}

\begin{document}
\maketitle

\begin{abstract}
We show that very small distances in a planar
integral point set are essentially one-dimensional.  Let \(P\) be a
non-collinear set of \(n\) points in the plane, all of whose pairwise distances
are integers. We prove that, for a sufficiently small $c>0$, at most one pair of points can determine a distance below $n^{c\log\log{n}}$ unless all such
short pairs are supported on a single line.  We also give a
construction showing that the line-supported
alternative is necessary: there are arbitrarily large non-collinear integral
point sets with one off-line point and with first two distinct distances
\(2\) and \(4\). We conjecture that in the general case (no three points are collinear)
even the smallest distance should be large, at least $n^{c\log\log{n}}$, however we can prove a linear lower bound only.
\end{abstract}

\section{Introduction and main statement}

A finite set \(P\subset \RR^2\) is called an \emph{integral point set} if
\(|pq|\in\ZZ\) for every pair of distinct points \(p,q\in P\).  The classical
theorem of Anning and Erd\H{o}s says that an infinite planar integral point set is
collinear \cite{AnningErdos}.  Quantitatively, Greenfeld, Iliopoulou and Peluse
recently proved a near-optimal lower bound for the diameter of a non-collinear
integral point set \cite{GIP}.  We shall use the following consequence as a
black box.

\begin{theorem}[GIP diameter lower bound, black box]\label{thm:gip}
There are absolute constants \(c_{\rm GIP}>0\) and \(n_{\rm GIP}\) such that
for every non-collinear integral point set \(P\subset \RR^2\) with
\(|P|=n\ge n_{\rm GIP}\),
\[
        \diam(P) \ge n^{c_{\rm GIP}\log\log n}.
\]
\end{theorem}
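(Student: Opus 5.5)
Since the paper treats this as a black box from \cite{GIP}, what follows is only an outline of how I would try to prove it, not a reconstruction of their argument. The reduction is to prove an upper bound of the form
\[
|P|\le D^{C/\log\log D},\qquad D=\diam(P),
\]
for every non-collinear integral point set $P$ and some absolute constant $C$. Taking logarithms gives $\log n\le C\log D/\log\log D$. Inverting this yields $\log D\ge c\,\log n\,\log\log n$, which is the statement of Theorem~\ref{thm:gip} once $n\ge n_{\rm GIP}$.

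\textbf{Step 1: normalize the coordinates.} I would use the classical Anning--Erd\H{o}s normalization. Fix two points $a,b\in P$ and a third point not on the line $ab$. Put $a$ at the origin and $b$ at $(|ab|,0)$. For every $p\in P$ we have $|pa|^2-|pb|^2=2x_p|ab|-|ab|^2$, so $x_p\in\frac{1}{2|ab|}\ZZ$. Using the third point and Heron's formula, every $y_p$ is a rational multiple of $\sqrt{k}$ for one fixed squarefree integer $k$, again with denominator $O(D)$. After clearing denominators, each point becomes a lattice point $(X_p,Y_p)$ of height $D^{O(1)}$. Each difference $p-q$ then satisfies a norm equation
\[
(X_p-X_q)^2+k(Y_p-Y_q)^2=M^2\,|pq|^2
\]
in $\ZZ[\sqrt{-k}]$, with a fixed scaling $M$.

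\textbf{Step 2: count with the divisor bound.} For a fixed $q$, each $p$ corresponds to an element $\alpha=(X_p-X_q)+(Y_p-Y_q)\sqrt{-k}$ whose norm is a perfect square $N\le D^{O(1)}$. I would parametrize such $\alpha$ through the ideal factorization of $N$. The number of elements of a given norm is at most $\tau(N)\le N^{O(1/\log\log N)}$. This is exactly the source of the $D^{C/\log\log D}$ factor. The loss to avoid is the sum over the roughly $D$ possible values of the integer distance $|pq|$.

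To remove that sum, I would use several base points at once. Choose $O(1)$ points $q_1,\dots,q_m\in P$ in general position. Since $p$ lies at integer distance from each $q_i$, the corresponding algebraic integers $\alpha_i$ all have square norm, and any two of them differ by the fixed elements $q_j-q_i$. The aim is to show that these simultaneous constraints force $\alpha_1$ to be essentially a square in $\ZZ[\sqrt{-k}]$, up to a unit and a divisor of a bounded quantity. This would make $p$ determined by a square root $\beta$ with $\beta^2\approx\alpha_1$, and the pairs $(\beta,\bar\beta)$ would then have to satisfy further polynomial conditions.

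\textbf{Main obstacle.} I expect the hard step to be turning these conditions into a count of $D^{o(1)}$ rather than $D^{1+o(1)}$. Concretely, one must show that the points of $P$, viewed through $\beta$, lie on a bounded number of algebraic curves of bounded degree, such as the hyperbolas with foci $q_i,q_j$ from the Anning--Erd\H{o}s argument, and that each such curve carries only $D^{O(1/\log\log D)}$ admissible points. My first attempt at the per-curve count would use a determinant-method bound of Bombieri--Pila type, combined with the divisor bound from Step~2. If the bound obtained this way is only of order $D^{\varepsilon}$, I would iterate the argument over many triples of base points and use the pigeonhole principle to recover the $\log\log$ saving. I expect this iteration to be where the genuine difficulty of \cite{GIP} lies.
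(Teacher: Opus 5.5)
Your outline has a genuine gap. The step you call the main obstacle is the entire content of the theorem, and neither the structural claims nor the counting tools you propose for it can produce a bound of the shape $D^{C/\log\log D}$. (The paper itself gives no proof; Theorem~\ref{thm:gip} is imported from \cite{GIP}.)

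Step~1 (which is Lemma~\ref{lem:normalization}), Step~2 and the final inversion are fine, but by themselves they only give trivial bounds. Summing the divisor bound over the $\asymp D$ possible values of $|pq|$ yields $D^{1+o(1)}$.

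The multi-anchor step does not improve this. Already for a single anchor, a square norm forces $\alpha=g\varepsilon\beta^2$, say when $\ZZ[\sqrt{-k}]$ has unique factorization, with a unit $\varepsilon$ and an unrestricted content $g$. So parametrizing by $(g,\beta)$ leaves as many degrees of freedom as before. You give no reason why further anchors should force $g$ to divide a bounded quantity.

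Likewise, the hyperbolas with foci $q_i,q_j$ are not a bounded family. There are $2|q_iq_j|+1$ levels, which is bounded only if the anchors are at bounded mutual distance, and nothing guarantees that: scaling an integral point set by an integer factor makes every distance as large as you like. Intersecting two such families is exactly the argument of Lemma~\ref{lem:threeanchors}. That gives $O(a^2)$ points for anchors of diameter $a$, so only a polynomial bound such as $|P|\ll D^2$.

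The counting tools in your last paragraph also fall short. For an irreducible plane curve of fixed degree $d$, a Bombieri--Pila bound gives $\ll_{d,\varepsilon}H^{1/d+\varepsilon}$ integral points of height at most $H$. With $H=D^{O(1)}$ this is a fixed power of $D$, not $D^{\varepsilon}$. You also give no mechanism by which pigeonholing over triples would upgrade even a $D^{\varepsilon}$ bound to $D^{C/\log\log D}$.

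That last shape is arithmetic rather than geometric. It is attained, up to the constant $C$, by the basic line-plus-one-point sets at the start of Section~5. These have $\tau(N)+1$ points and diameter $\asymp N$, with $\tau(N)=N^{(\log 2+o(1))/\log\log N}$ for suitable $N$. So the $\log\log$ saving is a divisor-function phenomenon, and your outline lacks a theorem locating the one-dimensional structure on which a divisor count applies.

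This is the main result of \cite{GIP}: all but a very small number of points of an integral point set lie on a single line or circle. Given that, the points are counted as follows, and this is where the exponent $C/\log\log D$ comes from:
\begin{itemize}
\item on the line, every point is at integral distance from a point off it, since $P$ is non-collinear, and a divisor bound of exactly the kind in your Step~2 counts them;
\item on the circle, the points are counted by the same kind of divisor bound.
\end{itemize}
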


For \(M\ge 1\), define the \emph{\(M\)-short-distance graph} \(G_M(P)\) on
vertex set \(P\) by joining two points whenever their distance is at most \(M\).
The result below says that, well below the GIP diameter scale, this graph has
only the obvious one-dimensional obstruction.

\begin{theorem}[Very short pairs are line-supported]\label{thm:main}
There are absolute constants \(\eta>0\) and \(n_0\) such that the following
holds.  Let \(P\subset \RR^2\) be a non-collinear integral point set with
\(|P|=n\ge n_0\), and let
\[
        1\le M\le n^{\eta\log\log n}.
\]
Then either \(G_M(P)\) has at most one edge, or all edges of \(G_M(P)\) are
contained in a single line.
\end{theorem}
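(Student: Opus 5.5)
The plan is to argue by contradiction. Suppose $G_M(P)$ has two edges $ab$ and $cd$ (possibly sharing an endpoint) that do not lie on a common line, with $|ab|=d_1\le M$ and $|cd|=d_2\le M$. The classical Anning--Erd\H{o}s mechanism then applies. For every $p\in P\setminus\{a,b\}$ the integer $|pa|-|pb|$ lies in $\{-d_1,\dots,d_1\}$, so $P$ is covered by at most $2d_1+1$ branches of confocal hyperbolas with foci $a,b$, degenerating to the perpendicular bisector and the two rays of line $ab$. The same holds for the focal pair $c,d$ with $2d_2+1$ curves.

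Because $ab$ and $cd$ are not collinear, a conic from the first family and a conic from the second family share no common component. Two distinct focal pairs give distinct confocal families, and the degenerate members are different lines. By B\'ezout, each pair of curves therefore meets in at most $4$ points, which gives
\[
  n \le 4(2d_1+1)(2d_2+1)+4 = O(M^2).
\]
This is the first step. It already settles the theorem when $M$ is at most a small power of $n$, but it is far from the range $M\le n^{\eta\log\log n}$. The real work is to beat $M^2$ by using Theorem~\ref{thm:gip}.

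For the second step I would use arithmetic rigidity. Put $a$ at the origin and $b=(d_1,0)$. For every $p=(x,y)\in P$ one has $x=(|pa|^2-|pb|^2+d_1^2)/(2d_1)\in\frac1{2d_1}\ZZ$. Moreover, by the standard fact that the squared areas of triangles in an integral point set are integer multiples of a fixed squarefree $D$, $y\in\frac{\sqrt D}{2d_1}\ZZ$. So $P$ lies in the lattice $\Lambda_1=\frac1{2d_1}(\ZZ\times\sqrt D\,\ZZ)$, and similarly in a rotated lattice $\Lambda_2$ adapted to $cd$. Non-collinearity of the two edges means $\Lambda_1\cap\Lambda_2$ is constrained: the rotation taking $ab$ to $cd$ has rational-type cosine with denominator at most $2M^2$. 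The intersection should therefore be a sublattice whose index, or a direction-dependent spacing, is controlled by $M$.

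The third step is to feed this back into Theorem~\ref{thm:gip}. Take a hyperbola class $Q\subseteq P$ with $|Q|\ge n/(2M+1)$; this class is again a non-collinear integral point set unless it is essentially a line. Then rescale, or pass to a sub-configuration, so that the bound on short distances becomes a bound on the diameter of an integral set of about $n/M^{O(1)}$ points. That would give a diameter of at most $M^{O(1)}$, and Theorem~\ref{thm:gip} forces $M^{O(1)}\ge (n/M^{O(1)})^{c_{\rm GIP}\log\log n}$. This contradicts the hypothesis on $M$ once $\eta$ is small compared to $c_{\rm GIP}$.

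I expect this third step to be the main obstacle. Turning the presence of two non-collinear short pairs into an integral set of comparable size but small diameter is the genuinely new ingredient. What I would try first is to exploit the far-field approximation $|pa|-|pb|\approx d_1\cos\theta_p$, where $\theta_p$ is the angle between $p-a$ and the direction of $ab$. Integrality of the two differences pins the direction of each far point to one of $O(M^2)$ admissible rays. Points on a common ray, or on a common hyperbola pair, would then be shown to form a set to which Theorem~\ref{thm:gip} applies after subtraction of a common offset. The degenerate case, in which all short edges lie on one line, is exactly where this pinning fails, consistent with the construction promised in the abstract.
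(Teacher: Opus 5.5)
There is a genuine gap. The argument never produces what Theorem~\ref{thm:gip} can be applied to, namely a polynomial bound $\diam(P)\le M^{O(1)}$ for the whole set $P$. Step 3, which you flag as the obstacle, cannot work as designed. Since $M$ may be as large as $n^{\eta\log\log n}$, which exceeds any fixed power of $n$, a sub-configuration of size $n/M^{O(1)}$ (for example a hyperbola class with $|Q|\ge n/(2M+1)$) may contain fewer than one point. So no loss of a factor $M^{O(1)}$ in cardinality is affordable, and rescaling does not in general preserve integrality. Step 1 bounds cardinality rather than diameter, so it only helps when $M\ll\sqrt n$. Its claim that the degenerate members are different lines is also false: the perpendicular bisector of $ab$ may be the line $cd$, and if $ab$ and $cd$ are the parallel sides of an isosceles trapezoid the two pairs share a perpendicular bisector. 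In those cases B\'ezout does not apply as stated. The lattice containment in Step 2 is true but gives no upper bound on distances.

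The missing idea is localization by integer roots. If a nonzero $f\in\ZZ[T]$ has coefficients of size at most $H$ and an integer root $R$, then $R$ divides the lowest nonzero coefficient, so $|R|\le H$. This is used twice.
\begin{enumerate}[label=(\roman*)]
\item Take two disjoint short pairs $AB$, $CD$ that are not all collinear. Put $R=|AC|$ and write $|AD|=R+h$, $|BC|=R-u$, $|BD|=R+h-v$ with $|u|,|v|,|h|\le M$. The Gram determinant of $B-A,C-A,D-A$ vanishes, so $R$ is a root of a quartic which, after multiplying by $8$, has integer coefficients of size $O(M^6)$. This quartic is not identically zero because the four points are not collinear: its $T^4$ coefficient is $-(u-v)^2$, and when $u=v$ its $T^2$ coefficient is $(d^2-u^2)(e^2-h^2)$. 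Hence the two pairs are within $O(M^6)$ of each other and yield three non-collinear anchors with sides $a=O(M^6)$. If the pairs share an endpoint, the sides are at most $2M$ directly.
\item For each $X\in P$, fix the bounded integers $|XA|-|XB|$ and $|XA|-|XC|$. Then $X$ is affine in $R=|XA|$, and $R$ is an integer root of a nonzero quadratic with $O(a^8)$ coefficients. So $|XA|=O(a^8)$.
\end{enumerate}
Thus $\diam(P)=O(M^{48})$, which contradicts Theorem~\ref{thm:gip} applied to $P$ itself once $48\eta<c_{\rm GIP}$. Your far-field remark, that integrality pins far points to finitely many asymptotic directions, is the right instinct. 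The conclusion to draw from it is that there are no far points at all, and the integer-root bound is what makes this quantitative.
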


Equivalently, if two different pairs of points determine distances at most
\(M\), and if these two pairs are not supported on one line, then
\(M\ge n^{\Omega(\log\log n)}\).  In particular, for every function
\(M(n)=n^{o(\log\log n)}\), the conclusion of Theorem~\ref{thm:main} holds for
all sufficiently large \(n\).

The proof is elementary after the use of Theorem~\ref{thm:gip}.
The first ingredient is a three-anchor localization lemma.  It is essentially
the usual hyperbola-intersection argument of Erd\H{o}s \cite{Erdos1945}, but we write it arithmetically after
putting the integral point set in a quadratic coordinate lattice.  The second
ingredient is a Gram-determinant calculation showing that two short pairs in
non-collinear position have polynomially bounded cross-distances.  The final
section gives a concrete family with one point off a line and
arbitrarily many points on the line, for which \(d_1=2\) and \(d_2=4\).  This
shows that the line-supported alternative in Theorem~\ref{thm:main} cannot be
removed.

Throughout the paper, all implicit constants are absolute and are not optimized.
All logarithms are natural.

\section{Three anchors and bounded triple intersections}

We begin with a small divisibility fact that will be used twice.

\begin{lemma}[Integer root bound]\label{lem:root}
Let
\[
        f(T)=a_dT^d+a_{d-1}T^{d-1}+\cdots+a_0\in\ZZ[T]
\]
be a nonzero polynomial of degree at most \(D\), and suppose that
\(|a_i|\le H\) for all \(i\).  If \(R\in\ZZ\) and \(f(R)=0\), then
\[
        |R|\le H.
\]
\end{lemma}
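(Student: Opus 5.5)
The plan is to use a divisibility argument rather than a Cauchy-type size estimate. The point is that \(R\) is an integer, so \(R\) must divide a suitable nonzero coefficient of \(f\). I would first dispose of the trivial case \(R=0\): then \(|R|=0\le H\), because \(H\ge |a_i|\ge 0\). So assume from now on that \(R\neq 0\).

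Since \(f\) is nonzero, let \(k\) be the smallest index with \(a_k\neq 0\), and write
\[
        f(T)=T^k\bigl(a_k+a_{k+1}T+\cdots+a_dT^{d-k}\bigr)=T^k g(T),
\]
where \(g\in\ZZ[T]\) and \(g(0)=a_k\neq 0\). From \(f(R)=0\) and \(R\neq 0\) we get \(R^k\neq 0\), hence \(g(R)=0\).

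If \(g\) is constant, then \(g(R)=a_k\neq 0\), which is a contradiction; so this case cannot occur. Otherwise \(g(R)=0\) can be rearranged as
\[
        a_k=-R\bigl(a_{k+1}+a_{k+2}R+\cdots+a_dR^{d-k-1}\bigr).
\]
The bracket is an integer, so \(R\mid a_k\) in \(\ZZ\). Since \(a_k\neq 0\), this gives \(|R|\le |a_k|\le H\), as required.

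There is no real obstacle here. The only points needing care are the following:
\begin{enumerate}[label=(\roman*)]
\item One must divide by the lowest nonzero coefficient, not by the constant term \(a_0\), because \(a_0\) may vanish.
\item The cases \(R=0\) and ``\(g\) constant'' must be handled separately.
\end{enumerate}
The degree bound \(D\) plays no role in the estimate.
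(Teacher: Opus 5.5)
Your proposal is correct and is essentially the paper's proof: dispose of \(R=0\), factor out \(T^j\) for the lowest nonzero coefficient \(a_j\), conclude \(R\mid a_j\), and hence \(|R|\le|a_j|\le H\). Your separate treatment of the case ``\(g\) constant'' makes explicit a case the paper leaves implicit; in the paper's rearrangement that case would force \(a_j=0\), so it cannot occur.
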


\begin{proof}
If \(R=0\), there is nothing to prove.  Otherwise let \(j\) be the smallest
index such that \(a_j\ne0\).  Then
\[
        f(T)=T^jg(T),
        \qquad
        g(T)=a_j+a_{j+1}T+\cdots+a_dT^{d-j},
\]
with \(g(0)=a_j\ne0\).  Since \(f(R)=0\) and \(R\ne0\), we have \(g(R)=0\).
Thus
\[
        a_j=-R\bigl(a_{j+1}+a_{j+2}R+\cdots+a_dR^{d-j-1}\bigr),
\]
so \(R\mid a_j\).  Hence \(|R|\le |a_j|\le H\).
\end{proof}

\begin{lemma}[Quadratic coordinate normalization]\label{lem:normalization}
Let \(P\) be an integral point set, and let \(A,B,C\in P\) be non-collinear.
After an isometry we may write
\[
        A=(0,0),\qquad B=(d,0),\qquad
        C=\left(\frac{U}{L},\frac{\sqrt{k}V}{L}\right),
        \qquad L=2d,
\]
where \(d=|AB|\in\ZZ_{>0}\), \(U,V\in\ZZ\), \(V\ne0\), and \(k\) is a positive
squarefree integer.  Moreover every point \(X\in P\) has coordinates
\[
        X=(x,\sqrt{k}y)
        \qquad\text{with }x,y\in\QQ.
\]
If \(|AB|,|AC|,|BC|\le a\), then
\[
        L\le 2a,
        \qquad |U|\le 2a^2,
        \qquad kV^2\le 4a^4.
\]
\end{lemma}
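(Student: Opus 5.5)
Place \(A\) at the origin and \(B=(d,0)\) on the positive \(x\)-axis; this uses only an isometry. For any point \(X=(x,y)\) write \(a_X=|AX|\) and \(b_X=|BX|\), both integers. Then
\[
b_X^2=a_X^2-2dx+d^2,
\]
so
\[
x=\frac{a_X^2-b_X^2+d^2}{2d}\in \tfrac{1}{L}\ZZ\subset\QQ.
\]
Hence \(y^2=a_X^2-x^2\) is rational. For \(X=C\), put
\[
U=a_C^2-b_C^2+d^2\in\ZZ .
\]
Then
\[
y_C^2=\frac{4d^2a_C^2-U^2}{L^2}=\frac{N}{L^2},\qquad N=L^2a_C^2-U^2\in\ZZ .
\]
We have \(N>0\) because \(C\) is off the line \(AB\). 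Write \(N=kV^2\) with \(k\) squarefree and \(V\ge1\). After a reflection if needed, \(y_C=\sqrt{k}V/L\). This gives the stated form of \(C\).

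Now take a general point \(X\). We need \(y_X\in\sqrt{k}\,\QQ\). Use the distance \(|CX|=c_X\in\ZZ\):
\[
c_X^2=a_X^2+a_C^2-2(x x_C+y y_C).
\]
Here \(x,x_C\) and every squared distance are rational, so \(y\,y_C\in\QQ\). If \(y=0\) there is nothing to show. Otherwise \(y=q/y_C\) with \(q\in\QQ\), so
\[
y=\frac{qL}{\sqrt{k}V}=\sqrt{k}\cdot\frac{qL}{kV}\in\sqrt{k}\,\QQ .
\]
Writing \(X=(x,\sqrt{k}\,y')\) with \(y'\in\QQ\) gives the claim. This is the key step, and it is exactly where non-collinearity of \(A,B,C\) enters: \(y_C\neq0\) lets us divide.

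For the bounds, assume all three distances are at most \(a\).

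1. Since \(d\le a\), we get \(L=2d\le 2a\).
2. For \(U\), note \(U=2d\,x_C\) and \(|x_C|\le a_C\le a\). So \(|U|\le 2a^2\). The direct bound \(|a_C^2-b_C^2+d^2|\le 2a^2\) gives the same result.
3. For \(V\), we have \(kV^2=N=L^2a_C^2-U^2\le L^2a_C^2\le 4a^2\cdot a^2=4a^4\).

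These estimates are routine. The only real content is the rationality argument above.
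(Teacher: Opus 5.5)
Your proof is correct and follows the paper's argument essentially step for step. You get the $x$-coordinates from $|XA|^2-|XB|^2$, write $C_y$ via the squarefree part of $L^2|AC|^2-U^2$, use integrality of $|XC|$ and $C_y\neq 0$ to force $Y\in\sqrt{k}\,\QQ$, and obtain the bounds from $|C_x|,|C_y|\le |AC|$. Your explicit identity $kV^2=L^2|AC|^2-U^2$ spells out the paper's ``we may write'' step, and the reflection making $V>0$ is harmless but not needed.
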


\begin{proof}
Place \(A=(0,0)\) and \(B=(d,0)\).  If \(c=|AC|\) and \(e=|BC|\), then
\[
        C_x=\frac{d^2+c^2-e^2}{2d}.
\]
Thus \(C_x=U/L\) with \(L=2d\) and \(U\in\ZZ\).  Since
\[
        C_y^2=c^2-\frac{U^2}{L^2},
\]
we may write \(C_y=\sqrt{k}V/L\), with \(k\) squarefree and \(V\in\ZZ\).  The
non-collinearity of \(A,B,C\) gives \(V\ne0\).

Now let \(X=(x,Y)\in P\).  Since \(|XA|\) and \(|XB|\) are integers, subtracting
squared distances gives
\[
        2dx=|XA|^2+d^2-|XB|^2\in\ZZ,
\]
so \(x\in\QQ\).  Since \(|XC|\) is also integral,
\[
        2C\cdot X=|XA|^2+|AC|^2-|XC|^2\in\ZZ.
\]
The term \((U/L)x\) is rational, so \((\sqrt{k}V/L)Y\) is rational.  As
\(V\ne0\), this gives \(Y=\sqrt{k}y\) with \(y\in\QQ\).

Finally, if the triangle has longest side at most \(a\), then \(L=2d\le2a\).
Also \(|U|/L=|C_x|\le |AC|\le a\), so \(|U|\le2a^2\).  Similarly,
\[
        \frac{kV^2}{L^2}=C_y^2\le |AC|^2\le a^2,
\]
whence \(kV^2\le4a^4\).
\end{proof}

The next lemma is the promised arithmetic version of the triple-intersection
bound.  The two integer differences
\(|XA|-|XB|\) and \(|XA|-|XC|\) select two hyperbola levels.  For fixed levels,
the possible radius \(|XA|\) is an integer root of a nonzero quadratic
polynomial with polynomially bounded coefficients.

\begin{lemma}[Three-anchor localization]\label{lem:threeanchors}
Let \(A,B,C\) be a non-collinear triangle in an integral point set, and suppose
that its longest side is at most \(a\), with \(a\ge2\).  Then every point \(X\)
at integral distance from \(A,B,C\) satisfies
\[
        |XA|\ll a^8.
\]
Consequently, all points at integral distance from \(A,B,C\) lie in a disk of
radius \(O(a^8)\).  Moreover, there are \(O(a^2)\) such points.
\end{lemma}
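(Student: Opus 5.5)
We reduce everything to the integer root bound (Lemma~\ref{lem:root}) by fixing the two hyperbola levels. Write \(r=|XA|\), \(s=|XB|\), \(t=|XC|\), all integers, and set \(p=r-s\), \(q=r-t\). By the triangle inequality \(|p|\le|AB|\le a\) and \(|q|\le|AC|\le a\), so there are \(O(a^2)\) admissible level pairs \((p,q)\). Fix one pair. Place \(A,B,C\) as in Lemma~\ref{lem:normalization}, with \(X=(x,Y)\), and put \(c=|AC|\).

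\emph{Step 1: coordinates of \(X\) are affine in \(r\).} Subtracting squared distances gives
\[
r^2-s^2=2dx-d^2 ,\qquad r^2-t^2=2\tfrac{U}{L}x+2\tfrac{\sqrt{k}V}{L}Y-c^2 .
\]
Using \(s=r-p\) and \(t=r-q\), the left sides become \(2pr-p^2\) and \(2qr-q^2\). Hence
\[
x=\frac{2pr-p^2+d^2}{L}.
\]
Since \(V\neq0\), we can solve for \(Y\):
\[
\sqrt{k}\,Y=\frac{N(r)}{2VL},\qquad N(r)=L\bigl(2qr-q^2+c^2\bigr)-2U\,\frac{2pr-p^2+d^2}{L}\cdot L/L ,
\]
that is, \(N\) is an integer linear polynomial after clearing the single factor \(L\). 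Substituting into \(r^2=x^2+Y^2\) and multiplying by a common denominator of size \(O(kV^2L^2)\) yields an integer polynomial \(f(r)\) of degree at most \(2\). Using the bounds \(L\le2a\), \(|U|\le2a^2\), \(kV^2\le4a^4\) and \(|p|,|q|,c,d\le a\), each coefficient is \(O(a^8)\). The dominant contribution is \(N(r)^2\), whose coefficients are squares of quantities of size \(O(a^4)\); the other terms, such as \(kV^2L^2r^2\), are smaller.

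\emph{Step 2: the polynomial is nonzero.} This is the step I expect to be the main obstacle. Suppose \(f\equiv0\). Then the affine curve \(X(r)=X_0+rW\) produced in Step 1 satisfies \(|X(r)|^2=r^2\) identically. Comparing coefficients gives \(|W|=1\), \(X_0\cdot W=0\) and \(|X_0|=0\), so \(X(r)=rW\).

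The level relation \(|X(r)-B|^2=(r-p)^2\), holding for all \(r\), then gives \(W\cdot B=p\) and \(|B|^2=p^2\). So \(|W\cdot B|=|B|\), which forces \(B\parallel W\). The same argument with \(C\) and \(q\) gives \(C\parallel W\). Hence \(A,B,C\) are collinear, a contradiction.

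Some care is needed here. Strictly, Step 1 derives \(x,Y\) from two linear equations that are equivalent to the level conditions only together with \(r^2=|X|^2\). I will therefore phrase the identity argument directly with these relations and check that nothing is lost.

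\emph{Step 3: conclusion.} Lemma~\ref{lem:root} with \(H=O(a^8)\) now gives \(|XA|=|r|\ll a^8\), so all such points lie in a disk of radius \(O(a^8)\) around \(A\). For counting, each level pair \((p,q)\) gives at most two roots \(r\) of the nonzero quadratic \(f\). Each \(r\) determines \(X\) uniquely through the affine formulas of Step 1. Summing over the \(O(a^2)\) level pairs gives \(O(a^2)\) points in total.
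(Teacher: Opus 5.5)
Your proposal is correct and follows the paper's proof essentially step for step. You fix the two hyperbola levels, write $X$ affinely in $r$, obtain the integer quadratic $4kV^2\bigl(L^2r^2-(2pr-p^2+d^2)^2\bigr)-N(r)^2$ with $O(a^8)$ coefficients, rule out $f\equiv0$ by the same Cauchy--Schwarz collinearity argument, and conclude with Lemma~\ref{lem:root} and the $O(a^2)$ count over level pairs. The only blemish is your garbled displayed formula for $N$, which should read $N(r)=L^2(2qr-q^2+c^2)-2U(2pr-p^2+d^2)$, exactly the paper's $A_1r+B_1$.
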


\begin{proof}
Use Lemma~\ref{lem:normalization} and write
\[
        A=(0,0),\qquad B=(d,0),\qquad
        C=\left(\frac{U}{L},\frac{\sqrt{k}V}{L}\right),
        \qquad L=2d.
\]
Let
\[
        X=(x,\sqrt{k}y),
        \qquad R=|XA|.
\]
Then \(R\in\ZZ\).  Put
\[
        m=|XA|-|XB|,
        \qquad n=|XA|-|XC|.
\]
The triangle inequality gives
\[
        |m|\le |AB|=d\le a,
        \qquad |n|\le |AC|\le a.
\]
Let \(c=|AC|\), and define
\[
        \alpha=d^2-m^2,
        \qquad \beta=c^2-n^2.
\]
Then \(\alpha,\beta\in\ZZ\), and \(|\alpha|,|\beta|\le a^2\).

Subtracting the squared equations for the distances from \(X\) to \(A\) and
\(B\), and using \(|XB|=R-m\), gives
\[
        (R-m)^2=R^2-2dx+d^2.
\]
Hence
\begin{equation}\label{eq:Lx}
        Lx=2mR+\alpha .
\end{equation}
Similarly, since \(|XC|=R-n\),
\[
        (R-n)^2=R^2-2C\cdot X+c^2,
\]
so
\[
        2C\cdot X=2nR+\beta.
\]
Using the displayed coordinates for \(C\) and \(X\), this is
\[
        \frac{2}{L}(Ux+kVy)=2nR+\beta.
\]
Multiplying by \(L^2\) and substituting \eqref{eq:Lx}, we obtain
\begin{equation}\label{eq:Ly}
        2kV(Ly)=A_1R+B_1,
\end{equation}
where
\[
        A_1=2nL^2-4mU,
        \qquad
        B_1=\beta L^2-2U\alpha .
\]
Now use \(R^2=x^2+ky^2\).  Multiplying by \(L^2\) gives
\[
        L^2R^2=(Lx)^2+k(Ly)^2.
\]
Combining this with \eqref{eq:Lx} and \eqref{eq:Ly}, we get
\begin{equation}\label{eq:triplepoly}
        4kV^2\left(L^2R^2-(2mR+\alpha)^2\right)
        -(A_1R+B_1)^2=0.
\end{equation}
Thus \(R\) is an integer root of the polynomial
\[
        F(T)=4kV^2\left(L^2T^2-(2mT+\alpha)^2\right)-(A_1T+B_1)^2.
\]

We claim that \(F\) is not identically zero.  For fixed \(m,n\), the two
linear equations above determine \(X\) as an affine function of \(R\), say
\[
        X=X_0+Re.
\]
If \(F\equiv0\), then \(|X_0+Re|^2=R^2\) as an identity in \(R\).  Hence
\[
        |e|=1,
        \qquad e\cdot X_0=0,
        \qquad |X_0|=0.
\]
Thus \(X_0=0\).  Plugging \(R=0\) into the two scalar-product equations gives
\[
        d^2-m^2=0,
        \qquad c^2-n^2=0.
\]
Therefore \(|m|=|AB|\) and \(|n|=|AC|\).  The linear part gives
\[
        B\cdot e=m,
        \qquad C\cdot e=n.
\]
Since \(|e|=1\), equality holds in Cauchy's inequality for both \(B\) and
\(C\).  Hence \(B\) and \(C\) are parallel, contradicting the non-collinearity
of \(A,B,C\).  Therefore \(F\not\equiv0\).

It remains to bound its coefficients.  From Lemma~\ref{lem:normalization},
\[
        L\le2a,
        \qquad |U|\le2a^2,
        \qquad kV^2\le4a^4.
\]
Also \(|m|,|n|\le a\) and \(|\alpha|,|\beta|\le a^2\).  Therefore
\[
        |A_1|\ll a^3,
        \qquad |B_1|\ll a^4.
\]
It follows from the formula for \(F\) that every coefficient of \(F\) is
\(O(a^8)\).  Since \(F\in\ZZ[T]\) is nonzero and \(R\in\ZZ\) is a root,
Lemma~\ref{lem:root} gives
\[
        R=|XA|\ll a^8.
\]
This proves the disk bound.

For the counting statement, there are at most \((2a+1)^2=O(a^2)\) choices for
\((m,n)\).  For each fixed pair \((m,n)\), the nonzero polynomial \(F\) has
degree at most \(2\), so there are at most two possible values of \(R\).  Once
\(m,n,R\) are fixed, the two linear equations determine \(X\) uniquely.  Hence
there are \(O(a^2)\) possible points \(X\).
\end{proof}

\section{Two short pairs}

We next prove that two short pairs in genuinely two-dimensional position cannot
be far apart.  This is the only place where we need four points.  The proof is
a Gram-determinant calculation in which one long cross-distance is an integer
root of an integer polynomial with polynomially bounded coefficients.

\begin{lemma}[Polynomial separation of two short pairs]\label{lem:twopairs}
Let \(A,B,C,D\) be four distinct points in the plane whose six mutual distances
are integers.  Suppose
\[
        |AB|=d,
        \qquad |CD|=e,
        \qquad 1\le d,e\le M.
\]
If \(A,B,C,D\) are not all collinear, then
\[
        \max\{|AC|,|AD|,|BC|,|BD|\}\ll M^6.
\]
\end{lemma}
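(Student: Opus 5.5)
The plan is to imitate the proof of Lemma~\ref{lem:threeanchors}: write the four unknown cross-distances in terms of one integer \(R\) plus bounded integer offsets, and show that \(R\) is an integer root of a nonzero integer polynomial whose coefficients are \(O(M^6)\). Lemma~\ref{lem:root} then gives the bound. Concretely, put \(R=|AC|\in\ZZ\) and
\[
        |AD|=R+p,\qquad |BC|=R+q,\qquad |BD|=R+r.
\]
By the triangle inequality, \(|p|\le e\le M\) (triangle \(ACD\)), \(|q|\le d\le M\) (triangle \(ABC\)), and \(|r|\le d+e\le 2M\). All of \(p,q,r\) are integers. So there are \(O(M^3)\) choices of offsets, and for each we get a one-parameter family of distance data.

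The relation comes from planarity. For four points in the plane the Cayley--Menger determinant of the six squared distances vanishes. This is the \(5\times5\) bordered determinant, which is a form of degree \(3\) in the squared distances. Substituting
\[
        |AC|^2=R^2,\quad |AD|^2=R^2+2pR+p^2,\quad \ldots,\quad |AB|^2=d^2,\quad |CD|^2=e^2
\]
gives a polynomial \(F(T)\in\ZZ[T]\) of degree at most \(6\) with \(F(R)=0\). Every entry of the matrix is a polynomial in \(T\) with coefficients \(O(M^2)\). Expanding the determinant into \(O(1)\) products of three such entries therefore bounds every coefficient of \(F\) by \(O(M^6)\). (The top powers of \(T\) will in fact cancel, but this does not matter for the coefficient bound.) Once \(F\not\equiv0\) is known, Lemma~\ref{lem:root} gives \(R\ll M^6\), and then \(|AD|,|BC|,|BD|\le R+2M\ll M^6\).

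The main obstacle is proving \(F\not\equiv 0\), and this is exactly where non-collinearity must enter. I would argue as in Lemma~\ref{lem:threeanchors}. Suppose \(F\equiv0\). Then for every real \(T\) the data \((T,T+p,T+q,T+r,d,e)\) has vanishing Cayley--Menger determinant. For \(T\) large, the three-point data of the triangles \(ACD\) and \(ABC\) satisfy strict triangle inequalities, except in the boundary cases \(|p|=e\) or \(|q|=d\) that must be treated separately. So the data is realized by a planar configuration \(A_T,B_T,C_T,D_T\) with \(|A_TB_T|=d\) and \(|C_TD_T|=e\) fixed.

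Now let \(T\to\infty\) with \(A_T=0\) fixed. The difference relations force
\[
        p=\lim\bigl(|A_TD_T|-|A_TC_T|\bigr)=e\cos\theta,\qquad q=-d\cos\varphi,
\]
and \(r=p+q\), where \(\theta,\varphi\) are the angles of \(CD\) and \(AB\) with the far direction. The idea is to show that this rigid asymptotic structure forces, for the particular integer value \(T=R\), a Cauchy-equality situation as in Lemma~\ref{lem:threeanchors}: either \(|p|=e\) and \(|q|=d\), so that both short segments are parallel to the long direction, or an exact identity among the distances that makes \(A,B,C,D\) collinear.

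If this limiting argument proves awkward, the fallback is to compute the leading surviving coefficient of \(F\) explicitly, expected to be a nonzero multiple of something like \((d^2-q^2)(e^2-p^2)\) or of \(r-p-q\), together with the next coefficient. I would then check directly that the simultaneous vanishing of all coefficients reduces to the collinear case \(|p|=e\), \(|q|=d\), \(r=p+q\). In that case, plugging back into the actual configuration forces all four points onto one line, contradicting the hypothesis.
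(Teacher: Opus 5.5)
Your setup is essentially the paper's. Your Cayley--Menger determinant is exactly $8$ times the paper's Gram determinant of $B-A,C-A,D-A$. Your offsets translate as $p=h$, $q=-u$, $r=h-v$, so your $F$ is the paper's $8F$. The $O(M^6)$ coefficient bound, the use of Lemma~\ref{lem:root}, and the final bound $|AD|,|BC|,|BD|\le R+2M$ are all fine.

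The gap is the step you yourself call the main obstacle: you never actually prove $F\not\equiv0$. The limiting argument as written yields at most $r=p+q$. The asymptotic relations $p=e\cos\theta$, $q=-d\cos\varphi$ hold for any far-away configuration with suitable angles, so by themselves they force nothing. The sentence about ``forcing, for the particular integer value $T=R$, a Cauchy-equality situation'' has no mechanism behind it. It could be completed as follows. Once $r=p+q$, both $C$ and $D$ lie on the fixed branch $|AX|-|BX|=-q$. If $|q|<d$ this branch is nondegenerate, and a chord of fixed length $e$ far out on it becomes parallel to the asymptote, hence to $AC$. Then $\cos\theta\to\pm1$, so $|p|=e$. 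None of this is in your proposal, and the boundary cases you set aside would also need treatment.

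Your fallback is the route the paper takes, and your guessed coefficients are exactly right, but they are stated as expectations rather than verified. The missing computation is short:
\begin{itemize}[nosep]
\item The $T^4$ coefficient of the Gram determinant is $-(u-v)^2=-(r-p-q)^2$, so $F\equiv0$ forces $r=p+q$.
\item After that substitution, the $T^2$ coefficient is $(d^2-u^2)(e^2-h^2)=(d^2-q^2)(e^2-p^2)$, so $|q|=d$ or $|p|=e$.
\end{itemize}
You need only one of these two equalities, not both (full vanishing does in fact force both, but one suffices).

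Each alternative, combined with $r=p+q$ and read off the actual configuration, gives collinearity through equality in two triangle inequalities:
\begin{itemize}[nosep]
\item If $|q|=d$, then $A,B,C$ are collinear, and $|r-p|=|q|=d$ makes $A,B,D$ collinear.
\item If $|p|=e$, then $A,C,D$ are collinear, and $|r-q|=|p|=e$ makes $B,C,D$ collinear.
\end{itemize}
Until this expansion is carried out, non-vanishing, and hence the lemma, is not established.
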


\begin{proof}
Put
\[
        R=|AC|.
\]
Define the integer differences
\[
        u=|AC|-|BC|,
        \qquad
        v=|AD|-|BD|,
        \qquad
        h=|AD|-|AC|.
\]
By the triangle inequality,
\[
        |u|\le d,
        \qquad |v|\le d,
        \qquad |h|\le e.
\]
Moreover
\[
        |AD|=R+h,
        \qquad |BC|=R-u,
        \qquad |BD|=R+h-v.
\]

Use \(A\) as the origin.  The three vectors \(B-A,C-A,D-A\) lie in the plane,
so their Gram determinant is zero.  In terms of the six distances, the Gram
matrix is
\[
\begin{pmatrix}
        d^2
        &
        uR+\dfrac{d^2-u^2}{2}
        &
        v(R+h)+\dfrac{d^2-v^2}{2}
        \\[6pt]
        uR+\dfrac{d^2-u^2}{2}
        &
        R^2
        &
        R^2+hR+\dfrac{h^2-e^2}{2}
        \\[6pt]
        v(R+h)+\dfrac{d^2-v^2}{2}
        &
        R^2+hR+\dfrac{h^2-e^2}{2}
        &
        (R+h)^2
\end{pmatrix}.
\]
Thus \(R\) is a root of the polynomial
\[
\begin{aligned}
        F(T)=\det
\begin{pmatrix}
        d^2
        &
        uT+\dfrac{d^2-u^2}{2}
        &
        v(T+h)+\dfrac{d^2-v^2}{2}
        \\[6pt]
        uT+\dfrac{d^2-u^2}{2}
        &
        T^2
        &
        T^2+hT+\dfrac{h^2-e^2}{2}
        \\[6pt]
        v(T+h)+\dfrac{d^2-v^2}{2}
        &
        T^2+hT+\dfrac{h^2-e^2}{2}
        &
        (T+h)^2
\end{pmatrix}.
\end{aligned}
\]
Multiplying by \(8\), we obtain a polynomial in \(\ZZ[T]\).  A direct
expansion shows that \(8F(T)\) has degree at most \(4\), and every coefficient
is \(O(M^6)\); indeed every coefficient is an integral polynomial in
\(d,e,u,v,h\) of total degree at most \(6\), and these five parameters are
\(O(M)\).

We now show that \(F\) is not identically zero unless the four points are
collinear.  The coefficient of \(T^4\) in \(F(T)\) is
\[
        -(u-v)^2.
\]
Hence \(F\equiv0\) implies \(u=v\).  Substituting \(u=v\), the coefficient of
\(T^2\) becomes
\[
        (d^2-u^2)(e^2-h^2).
\]
Thus either \(|u|=d\), or \(|h|=e\).

If \(|u|=d\), then
\[
        \bigl||AC|-|BC|\bigr|=|AB|,
\]
so equality holds in the triangle inequality for the triangle \(ABC\).  Hence
\(A,B,C\) are collinear.  Since \(u=v\), we also have
\[
        \bigl||AD|-|BD|\bigr|=|AB|,
\]
so \(A,B,D\) are collinear.  Therefore \(A,B,C,D\) are all collinear.

If \(|h|=e\), then
\[
        \bigl||AD|-|AC|\bigr|=|CD|,
\]
so \(A,C,D\) are collinear.  Also, since \(u=v\),
\[
        |BD|-|BC|=(R+h-v)-(R-u)=h,
\]
so
\[
        \bigl||BD|-|BC|\bigr|=|CD|.
\]
Thus \(B,C,D\) are collinear.  Again all four points are collinear.  This
contradicts the hypothesis, and so \(F\not\equiv0\).

Since \(8F\in\ZZ[T]\) is nonzero, its coefficients are \(O(M^6)\), and
\(R\in\ZZ\) is a root, Lemma~\ref{lem:root} gives
\[
        |AC|=R\ll M^6.
\]
The remaining cross-distances satisfy
\[
        |AD|=R+h,
        \qquad |BC|=R-u,
        \qquad |BD|=R+h-v,
\]
with \(|u|,|v|,|h|\le M\).  Hence all four cross-distances are \(O(M^6)\).
\end{proof}

\begin{corollary}[Two non-collinear short pairs give small anchors]\label{cor:smallanchors}
Let \(P\) be an integral point set.  Suppose that \(P\) contains two distinct
pairs \(AB\) and \(CD\) of length at most \(M\).  If these two pairs are not
contained in a single line, then \(P\) contains three non-collinear points whose
mutual distances are all \(O(M^6)\).
\end{corollary}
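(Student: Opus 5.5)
We split according to whether the two pairs share a point. Since \(AB\) and \(CD\) are distinct pairs, the sets \(\{A,B\}\) and \(\{C,D\}\) have at most one point in common.

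\emph{Shared endpoint.} Relabel so that \(C=A\). Then the points are \(A\), \(B\), \(D\), with \(|AB|\le M\) and \(|AD|\le M\). The two pairs are not contained in a single line, so \(A,B,D\) are non-collinear. By the triangle inequality, \(|BD|\le |AB|+|AD|\le 2M\). So \(A,B,D\) is the required triangle, with all sides \(O(M)\).

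\emph{Disjoint pairs.} Now \(A,B,C,D\) are four distinct points with all six mutual distances integral. They are not all collinear, since otherwise both pairs would lie on one line. Lemma~\ref{lem:twopairs} therefore gives that all four cross-distances \(|AC|,|AD|,|BC|,|BD|\) are \(O(M^6)\). Together with \(|AB|,|CD|\le M\), every distance among the four points is \(O(M^6)\). It remains to find three non-collinear points among them. Not all four points are collinear, so the line \(AB\) misses at least one of \(C,D\). If \(C\notin AB\), take the triangle \(A,B,C\); if \(D\notin AB\), take \(A,B,D\). In either case the three mutual distances are \(O(M^6)\).

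There is no real obstacle here, since Lemma~\ref{lem:twopairs} does all the work. The only points needing care are the shared-endpoint case and reading ``not contained in a single line'' correctly. In the shared-endpoint case the lemma does not apply, because it requires four distinct points, so we use the triangle inequality instead. The hypothesis ``not contained in a single line'' is exactly what gives non-collinearity of the four points in the disjoint case, and of the three points in the shared case.
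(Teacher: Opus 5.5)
Your proof is correct and follows the paper's route. You treat the shared-endpoint case with the triangle inequality. In the disjoint case you apply Lemma~\ref{lem:twopairs} and then pick a non-collinear triple among the four points. You are slightly more explicit than the paper: you justify why ``not contained in a single line'' forces the four points to be non-collinear, and you name the triple $A,B,C$ or $A,B,D$. The argument is otherwise the same.
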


\begin{proof}
If the two pairs share an endpoint, say the pairs are \(AB\) and \(AC\), and
\(A,B,C\) are not collinear, then
\[
        |BC|\le |BA|+|AC|\le 2M,
\]
so \(A,B,C\) themselves are suitable anchors.

If the two pairs have four distinct endpoints, and the four endpoints are not
all collinear, then Lemma~\ref{lem:twopairs} gives diameter \(O(M^6)\) for the
four endpoints.  Since they are not all collinear, some three of them are
non-collinear, and these three have mutual distances \(O(M^6)\).
\end{proof}

\section{Proof of the main theorem}

We now prove Theorem~\ref{thm:main}.  Let \(C_1,C_2\) be absolute constants such
that Lemma~\ref{lem:twopairs} and Lemma~\ref{lem:threeanchors} imply the
following: if \(P\) contains two pairs of length at most \(M\) not contained in a
single line, then \(P\) contains three non-collinear anchors of mutual distances
at most \(C_1M^6\), and consequently
\[
        \diam(P)\le C_2M^{48}.          \tag{4.1}
\]
Indeed, every point of \(P\) is at integral distance from the three anchors, so
Lemma~\ref{lem:threeanchors}, applied with \(a=C_1M^6\), puts all of \(P\) in a
disk of radius \(O(M^{48})\).

Choose
\[
        0<\eta<\frac{c_{\rm GIP}}{100}.
\]
Increasing \(n_0\), if necessary, we may assume that for all \(n\ge n_0\),
\[
        C_2 n^{48\eta\log\log n}<n^{c_{\rm GIP}\log\log n}.
\]
Now let \(P\) be a non-collinear integral point set with \(|P|=n\ge n_0\), and
let \(M\le n^{\eta\log\log n}\).

Suppose that \(G_M(P)\) has two edges not contained in a single line.  By the
preceding paragraph,
\[
        \diam(P)
        \le C_2M^{48}
        \le C_2n^{48\eta\log\log n}
        < n^{c_{\rm GIP}\log\log n},
\]
contradicting the GIP lower bound, Theorem~\ref{thm:gip}.  Therefore every two
edges of \(G_M(P)\) are contained in a single line.  If \(G_M(P)\) has at most
one edge, we are done.  Otherwise choose one edge \(pq\) of \(G_M(P)\), and let
\(\ell\) be the line through \(p\) and \(q\).  Every other edge must be contained
in a line together with \(pq\), hence must also lie on \(\ell\).  Thus all edges
of \(G_M(P)\) are contained in one line.

This proves Theorem~\ref{thm:main}.

\section{\texorpdfstring{An almost-collinear construction with \(d_1=2\) and \(d_2=4\)}{An almost-collinear construction with d1=2 and d2=4}}

Theorem~\ref{thm:main} cannot be strengthened by removing the line-supported
alternative.  We give an explicit family with one point off a line and
arbitrarily many points on the line, in which the first two distinct distances
are \(2\) and \(4\).

Start with a positive integer \(N\).  Put
\[
        q=(1,2\sqrt N).
\]
For every divisor \(a\mid N\), define a line point
\[
        p_a=\left(1+\frac Na-a,0\right).
\]
Then
\[
\begin{aligned}
        |q-p_a|^2
        &=\left(\frac Na-a\right)^2+4N  \\
        &=\left(\frac Na+a\right)^2.
\end{aligned}
\]
Hence
\[
        |q-p_a|=\frac Na+a\in\ZZ.
\]
Also all distances among the line points \(p_a\) are integers, since their
\(x\)-coordinates are integers.  Thus every finite subset of these points,
together with \(q\), is an integral point set.

We now choose \(N\) so that the line contains prescribed gaps \(2\) and \(4\),
while the number of available divisors tends to infinity.  Consider the Pell
family
\[
        R_j+U_j\sqrt3=(1+\sqrt3)(2+\sqrt3)^j,
        \qquad j=0,1,2,\ldots .
\]
Then
\[
        R_j^2-3U_j^2=-2,
\]
and \(R_j,U_j\) are odd.  Set
\[
        N_j=\frac{R_j^2-1}{4}=\frac{3(U_j^2-1)}{4}.
\]
The two divisors
\[
        a_1=\frac{R_j+1}{2},
        \qquad
        a_2=\frac{R_j-1}{2}
\]
produce the line points
\[
        x_{a_1}=1+\frac{N_j}{a_1}-a_1=0,
        \qquad
        x_{a_2}=1+\frac{N_j}{a_2}-a_2=2.
\]
Thus the line contains a gap of length \(2\).  Similarly, the two divisors
\[
        b_1=\frac{U_j+1}{2},
        \qquad
        b_2=\frac{U_j-1}{2}
\]
produce
\[
        x_{b_1}=U_j-1,
        \qquad
        x_{b_2}=U_j+3,
\]
so the line contains a gap of length \(4\).

It remains to see that we can make arbitrarily many line points.  The divisor
function \(\tau(N_j)\) is unbounded along this Pell family.  Indeed, fix any
finite set \(S\) of odd primes.  The recurrence
\[
        \binom{R_{j+1}}{U_{j+1}}
        =
        \begin{pmatrix}2&3\\1&2\end{pmatrix}
        \binom{R_j}{U_j}
\]
has an invertible transition matrix modulo every odd prime.  Hence, modulo each
\(p\in S\), the orbit \((R_j,U_j)\) is periodic and returns to
\((R_0,U_0)=(1,1)\) after some positive period.  Choosing \(j\) to be a common
multiple of these periods gives
\[
        R_j\equiv1\pmod p
        \qquad\text{for every }p\in S.
\]
Therefore every \(p\in S\) divides \(N_j=(R_j^2-1)/4\).  Since \(S\) is
arbitrary, the numbers \(N_j\) have arbitrarily many distinct prime divisors,
and so \(\tau(N_j)\to\infty\) along a subsequence.

For such an \(N_j\), let
\[
        X_j=\left\{1+\frac{N_j}{a}-a:\\ a\mid N_j\right\}\subset\ZZ.
\]
This set has \(\tau(N_j)\) elements and contains
\[
        0,
        \quad 2,
        \quad U_j-1,
        \quad U_j+3.
\]
For large \(j\), these four special points are distinct and have no mutual gap
\(1\) or \(3\), while they do have gaps \(2\) and \(4\).  Delete from
\(X_j\setminus\{0,2,U_j-1,U_j+3\}\) all points lying at distance \(1\) or \(3\)
from one of the four special points.  This removes only \(O(1)\) points.
On the remaining set, form the graph in which two integers are adjacent if
their difference is \(1\) or \(3\).  This graph has maximum degree at most
\(4\), and hence contains an independent set of size at least one fifth of its
number of vertices.  Adjoining the four special points to such an independent
set gives a subset
\[
        Y_j\subset X_j
\]
with \(|Y_j|\to\infty\), containing gaps \(2\) and \(4\), and containing no gap
\(1\) or \(3\).

Now define
\[
        P_j=\{q_j\}\cup\{(x,0):x\in Y_j\},
        \qquad q_j=(1,2\sqrt{N_j}).
\]
Then \(P_j\) is a non-collinear integral point set.  Its line-line distances
are positive integers with no values \(1\) or \(3\), and with values \(2\) and
\(4\) present.  The off-line distances are
\[
        |q_j-p_a|=\frac{N_j}{a}+a\ge 2\sqrt{N_j},
\]
which are larger than \(4\) for all large \(j\).  Therefore, for all large
\(j\), the first two distinct distances in \(P_j\) are exactly
\[
        d_1(P_j)=2,
        \qquad
        d_2(P_j)=4,
\]
while \(|P_j|\to\infty\).  This is the desired almost-collinear obstruction.

\section{The minimal distance}

In the previous sections we proved that for integral point sets with no three collinear points only one pair can determine a smaller distance.
On the other hand we conjecture that even the minimal distance is at least $n^{c\log{\log{n}}}$ for $n$-element integral sets (with some fixed $c>0$).
The best lower bound we can prove is that the minimal distance is at least $n/2$. We are using the fact that a hyperbola arc, far from the center 
can't hold a triangle with integer side-lengths. This idea was used in \cite{Soly1} for bounding the diameter of integral pointsets. The $n/2$ lower bound 
follows directly from the lemma below.

\begin{lemma}[One point per asymptote class]
Let \(P\) be an integral point set, and let \(p,q\in P\) realize the minimum
distance
\[
        |pq|=d.
\]
Assume that every distance in \(P\), except \(|pq|\), is larger than \(4d^2\).
Then every pair of centrally symmetric hyperbola arcs determined by \(p,q\)
contains at most one point of \(P\setminus\{p,q\}\).
\end{lemma}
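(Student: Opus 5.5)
The plan is to reduce the statement to the Gram-determinant machinery of Lemma~\ref{lem:twopairs}, applied with the short pair \(A=p\), \(B=q\) and the two putative points \(C=X\), \(D=Y\) on the same pair of arcs. First I fix what the arcs are. For \(X\in P\setminus\{p,q\}\) put \(m_X=|Xp|-|Xq|\in\ZZ\), so that \(|m_X|\le d\). The pair of centrally symmetric arcs (about the midpoint of \(pq\)) containing \(X\) is \(\{Z:\ \bigl||Zp|-|Zq|\bigr|=|m_X|\}\). Suppose two distinct points \(X,Y\in P\setminus\{p,q\}\) lie on the same pair. Then \(m_Y=\pm m_X\), and I split into the case \(m_Y=m_X\) (same arc) and \(m_Y=-m_X\) (opposite arcs). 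In the notation of Lemma~\ref{lem:twopairs} this means \(u=v\) or \(u=-v\). I also use \(|m_X|<d\): if \(|m_X|=d\), then \(X\) lies on the line \(pq\) outside the segment, and one argues separately that the distances exceeding \(4d^2\) rule out two such points together with their mutual distance. I expect this to be routine.

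\emph{Same arc, \(u=v\).} This is exactly the degenerate case isolated in the proof of Lemma~\ref{lem:twopairs}: the \(T^4\) coefficient \(-(u-v)^2\) vanishes, so \(F\) has degree at most \(3\). Here the pair \(CD=XY\) is no longer short. I therefore treat \(e=|XY|\) and \(h=|Yp|-|Xp|\), with \(|h|\le e\), as additional integer parameters. I then read the vanishing of \(F(R)\), with \(R=|Xp|>4d^2\), as an equation and try to show it is incompatible with the size hypothesis. Concretely, I would expand \(F\) with \(u=v\) and collect terms. The expected shape is
\[
  (d^2-u^2)(e^2-h^2)R^2 \;+\; (\text{terms of order } d^2\, e\, R \text{ and lower}) \;=\; 0 .
\]
Since \(d^2-u^2\ge 1\), this should force \(e^2-h^2\ll d^2 e/R\). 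Using \(R>4d^2\), this gives \(e^2-h^2<e\) roughly, hence \(|h|=e\) by integrality (note \(e^2-h^2=(e-|h|)(e+|h|)\ge e\) unless \(|h|=e\)). But then \(p,X,Y\) are collinear, and also \(q,X,Y\) are collinear, as in the proof of Lemma~\ref{lem:twopairs}. So all four points are collinear, which contradicts \(|m_X|<d\).

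\emph{Opposite arcs, \(u=-v\).} Here I would apply the central symmetry directly. The alternative is the same computation with \(v=-u\): the \(T^4\) coefficient becomes \(-4u^2\). If \(u=0\), both arcs coincide with the perpendicular bisector and we are back in the previous case. Otherwise I would instead use the geometry. Points on opposite arcs far from the centre lie near opposite rays of the same asymptote, so \(|XY|\approx |Xp|+|Yp|\) up to an error \(O(d^2/R)\). Combining this with integrality of \(|XY|-|Xp|-|Yp|\) forces \(|XY|=|Xp|+|Yp|\), i.e.\ \(p\in XY\). The same holds for \(q\), again giving collinearity.

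The main obstacle is the quantitative step in the same-arc case: controlling the lower-order coefficients of the Gram polynomial carefully enough that the threshold \(4d^2\) in the hypothesis suffices. This is presumably where the constant \(4\) comes from. I would first try to avoid full expansion. The idea is to write \(X\) and \(Y\) via the affine parametrization \(X=X_0+Re\) from the proof of Lemma~\ref{lem:threeanchors}, where \(|X_0|\ll d\) and the direction depends only on \(m\). Then \(|XY|\) and \(|Yp|-|Xp|\) differ by an explicit quantity of size \(O(d^2/\min(R,R'))<1\), which integrality forces to vanish.
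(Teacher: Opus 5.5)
The main gap is the object you are working with. You take the pair of centrally symmetric arcs through $X$ to be the whole hyperbola $\{Z:\bigl||Zp|-|Zq|\bigr|=|m_X|\}$, i.e.\ both branches on both sides of the line $pq$. The lemma is about something smaller, as its title ``one point per asymptote class'' indicates. It concerns a signed half-branch $(k,\sigma)$, the points with $|Zp|-|Zq|=k$ on the side $\sigma$ of the line $pq$, together with its image $(-k,-\sigma)$ under the central symmetry. These two arcs run along opposite rays of a single asymptote.

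For your larger set the claim is false. Take $d=5$, $p=(-\tfrac52,0)$, $q=(\tfrac52,0)$, $X=(\tfrac{49}{2},120)$, $X'=(\tfrac{49}{2},-120)$. Then $|Xp|=|X'p|=123$, $|Xq|=|X'q|=122$, $|XX'|=240$, all exceeding $4d^2=100$, and both points lie on the branch $|Zp|-|Zq|=1$. So your case $u=v$ has genuine counterexamples. Likewise $X=(\tfrac{485}{2},1188)$, $Y=(-\tfrac{485}{2},1188)$ give $|Xp|=|Yq|=1213$, $|Xq|=|Yp|=1212$, $|XY|=485$, with $X,Y$ on opposite branches but on the same side of $pq$. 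So your claim that points on ``opposite arcs'' lie near opposite rays of one asymptote holds only for the correctly paired arcs $(k,\sigma)$ and $(-k,-\sigma)$.

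Your same-branch computation fails exactly here. With $u=v$ the Gram relation is
\[
(d^2-u^2)(e^2-h^2)\,R(R+h-u)-\tfrac12(d^2-u^2)(e^2-h^2)\Bigl(uh+\tfrac{d^2-u^2}{2}\Bigr)-\tfrac14 d^2(e^2-h^2)^2-\tfrac14(d^2-u^2)^2h^2=0.
\]
So the lower-order part is not $O(d^2eR)$. The term $-\tfrac14 d^2(e^2-h^2)^2$ balances the leading one when $e^2-h^2\approx 4(1-u^2/d^2)R(R+h)$, which is the configuration $X,X'$ above.

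More fundamentally, the relation involves only distances, and reflecting $Y$ in the line $pq$ preserves $|Yp|$ and $|Yq|$. As a quadratic in $e^2-h^2$ it has a small root (same side of $pq$, which $R,R+h>4d^2$ does exclude) and a large root (opposite side, which is allowed). So no argument that ignores the side of $pq$ can work. Your fallback has the same defect: with only the two foci there is one linear relation, so $X$ is determined by $R$ only up to this reflection, and there is no affine parametrization $X_0+Re$.

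The missing ingredient is to carry the sign, as the paper does with $x(z,\sigma)=\bigl(\tfrac{kz}{2d},\sigma\tfrac{\sqrt{d^2-k^2}}{2d}\sqrt{z^2-d^2}\bigr)$, $z=|Zp|+|Zq|$. Using $z-\sqrt{z^2-d^2}<d^2/z$ and $z>4d^2$, one gets two estimates:
\begin{itemize}
\item two points on one signed arc satisfy $t<|ab|<t+1$ with $t=\bigl||bp|-|ap|\bigr|\in\ZZ$;
\item points on $(k,\sigma)$ and $(-k,-\sigma)$ satisfy $S-1<|ab|<S$ with $S=|ap|+|bp|\in\ZZ$.
\end{itemize}
Your opposite-arc sketch is essentially the second estimate.

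Separately, your aside that two points with $|m|=d$ are excluded is also false: collinear points on the line $pq$ at integer spacings above $4d^2$ are allowed. Such points lie on no hyperbola arc, however, which is why the paper takes $|k_x|<d$.
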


\begin{proof}
Place
\[
        p=\left(-\frac d2,0\right),
        \qquad
        q=\left(\frac d2,0\right).
\]
For \(x\in P\setminus\{p,q\}\), put
\[
        r_x=|xp|,\qquad s_x=|xq|,
\]
and
\[
        k_x=r_x-s_x,\qquad z_x=r_x+s_x.
\]
Then \(k_x,z_x\in\mathbb Z\), \(z_x\equiv k_x\pmod 2\), and
\[
        |k_x|<d.
\]

For fixed \(k\), a signed arc is parametrized by
\[
        x(z,\sigma)
        =
        \left(
        \frac{kz}{2d},
        \sigma\frac{\sqrt{d^2-k^2}}{2d}\sqrt{z^2-d^2}
        \right),
        \qquad \sigma\in\{+1,-1\}.
\]
The centrally symmetric arc is
\[
        x(w,-\sigma)
        \quad\text{on the level }-k.
\]

Suppose first that two points lie on the same signed arc:
\[
        a=x(z,\sigma),\qquad b=x(z+2t,\sigma),
        \qquad t\in\mathbb Z_{\ge1}.
\]
Then
\[
        |bp|-|ap|=t.
\]
By strict triangle inequality in the triangle \(pab\),
\[
        |ab|>t.
\]
On the other hand, the standard estimate
\[
        \sqrt{u^2-d^2}=u-\frac{d^2}{u+\sqrt{u^2-d^2}}
\]
gives, since \(z>4d^2\),
\[
        |ab|<t+1.
\]
Thus
\[
        t<|ab|<t+1,
\]
which is impossible because both \(t\) and \(|ab|\) are integers.

Now suppose that one point lies on the arc \((k,\sigma)\) and the other on
the centrally symmetric arc \((-k,-\sigma)\):
\[
        a=x(z,\sigma),\qquad b=x(w,-\sigma).
\]
Let
\[
        S=|ap|+|bp|.
\]
Since both distances are integers,
\[
        S\in\mathbb Z.
\]
A direct calculation gives
\[
        S=\frac{z+w}{2}.
\]
Write
\[
        g(u)=\sqrt{u^2-d^2}.
\]
Then
\[
        |ab|^2
        =
        \left(\frac{k(z+w)}{2d}\right)^2
        +
        \left(
        \frac{\sqrt{d^2-k^2}}{2d}(g(z)+g(w))
        \right)^2 .
\]
Since \(g(u)<u\), we have
\[
        |ab|<\frac{z+w}{2}=S.
\]
Moreover,
\[
        u-g(u)=\frac{d^2}{u+g(u)}<\frac{d^2}{u}.
\]
Hence, using \(z,w>4d^2\),
\[
        S-|ab|
        <
        \frac{d^2}{z}+\frac{d^2}{w}
        <
        1.
\]
Therefore
\[
        S-1<|ab|<S.
\]
Again this is impossible because \(S\) and \(|ab|\) are integers.

Thus neither one arc nor its centrally symmetric partner can contain two
points in total.  This proves the lemma.
\end{proof}

\section{Acknowledgements} The author is supported by an NSERC Discovery Grant.

\end{document}